\documentclass[leqno]{amsart}

\usepackage{amsmath}
\usepackage{amsfonts}
\usepackage{amssymb}
\usepackage{pdfsync}
\usepackage{color,graphicx,tikz-cd}
\usepackage{a4wide}
\usepackage{amscd,amsthm,latexsym}
\usepackage{hyperref}
\usepackage{kotex}
\usepackage{relsize}
\usepackage{cite}

\allowdisplaybreaks %

\title{A proof of the Delta Conjecture when $q=0$}

\author{Adriano Garsia}
\address{Department of Mathematics, University of California, San Diego, La Jolla, CA, USA}
\email{garsia@math.ucsd.edu}

\author{Jim Haglund}
\address{
Department of Mathematics, University of Pennsylvania, Philadelphia, PA, USA}
\email{jhaglund@math.upenn.edu}

\author{Jeffrey B. Remmel}
\address{
Department of Mathematics, University of California, San Diego, La Jolla, CA, USA}
\email{jremmel@ucsd.edu}

\author{Meesue Yoo}
\address{
Applied Algebra and Optimization Research Center, Sungkyunkwan University, Suwon,
South Korea}
\email{meesue.yoo@skku.edu}

\date{\today}

\thanks{
The first  author was supported by NSF grant DMS-$1700233$.
The second author was supported by NSF grant DMS-$1600670$. The fourth author was supported by NRF grants $2016R1A5A1008055$ and $2017R1C1B2005653$.  
}

\keywords{Delta Conjecture, Macdonald polynomials}

\subjclass[2010]{Primary:05E05  ; Secondary: 05E10}

\date{\today}

\newtheorem{thm}{Theorem}[section]
\newtheorem{lem}[thm]{Lemma}
\newtheorem{prop}[thm]{Proposition}

\theoremstyle{definition}

\def \o1  {{\overline }}
\def \l {{\ell}}

\def\tttt #1{{\textstyle{#1} }}

\def \magstep#1 {\ifcase#1 1000\or 1200\or 1440\or 1728\or 2074\or 2488\fi\relax}

\def\la{{\lambda}}

\overfullrule=0pt
\baselineskip 14pt

\parindent=.5truein
\hfuzz=2.44182pt
\hsize 6.5truein

\font\title=cmbx10 scaled\magstep2

\def \-> {\rightarrow}

\def\la {\lambda}

\def \RA {\rightarrow}

\def \ses {\enskip = \enskip}

\def \ess {\enskip}

\def \ssp {\hskip .25em}
\def \bigsp {\hskip .5truein}
\def \part {\vdash}

\def \RA {{ \rightarrow }}

\def \ess {\enskip}
\def \ssp {\hskip .25em}
\def \bigsp {\hskip .5truein}
\def \part {\vdash}


\begin{document}


\begin{abstract}
In [The Delta Conjecture, {\it Trans. Amer. Math. Soc.}, to appear] Haglund, Remmel, Wilson introduce a conjecture which gives a combinatorial prediction for the result of
applying a certain operator to an elementary symmetric function.  This operator, defined in terms of its action on the modified Macdonald basis, has played a role in work of Garsia and Haiman on 
diagonal harmonics, the Hilbert scheme, and Macdonald polynomials
[A. M. Garsia and M. Haiman. A remarkable {$q,t$}-Catalan sequence and $q$-Lagrange inversion, {\it J. Algebraic Combin.} {\bf 5} (1996), 191--244],  
[M. Haiman, Vanishing theorems and character formulas for the Hilbert scheme of points in the plane,
{\it Invent. Math.} {\bf 149} (2002), 371-407]. The Delta Conjecture involves two parameters $q,t$; in this article we give the first proof that the Delta Conjecture is true when $q=0$ or $t=0$.  
\end{abstract}


\maketitle


\section{Introduction}


For any partition $\mu$, we let ${\widetilde H}_{\mu}(X;q,t)$ denote the modified Macdonald polynomial, and 
\begin{align}
B_{\mu}(q,t)  =\sum_{c\in \mu}q^{a'(c)}t^{l'(c)},
\end{align}
where the sum is over all squares $c$ in the Ferrers shape of $\mu$, $a^{\prime}(c)$, the coarm of $c$, is the distance to the left border of $\mu$, and
$l^{\prime}(c)$, the coleg of $c$, is the distance to the bottom border of $\mu$, as in 
Figure \ref{legarm}.

\begin{figure}[!ht]
\begin{center}
\includegraphics[scale=.5]{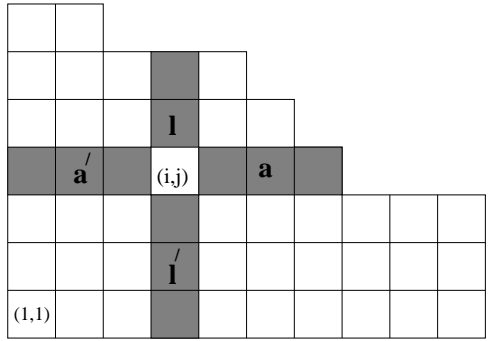}
\caption{The arm $a$, coarm $a^{\prime}$, leg $l$ and coleg $l^{\prime}$ of a cell.}
\label{legarm}
\end{center}
\end{figure}

Given any symmetric function $f$, let $\Delta ^{\prime}_f$ be the linear operator defined on the modified Macdonald basis ${\widetilde H}_{\mu}(X;q,t)$ as
\begin{align}
\Delta^{\prime} _{f} {\widetilde H}_{\mu}(X;q,t) =  f[B_{\mu}(q,t)-1] {\widetilde H}_{\mu}(X;q,t),
\end{align}
where by $f[B_{\mu}(q,t)-1]$ we mean the result of substituting the elements $q^{a'(c)}t^{l'(c)}$, $c\in \mu$, $c \ne (1,1)$, for the variables in the definition of $f$.   This is the simplest example of a 
{\it plethystic substitution}, which we define in Section \ref{Prelim}. Throughout this article, when dealing with a plethystic substitution of some alphabet $E$ into a symmetric function $f$, we will place $E$ inside square brackets, as in $f[E]$, as opposed to placing $E$ inside parentheses.  We will also use the standard notation $e_n(X)$ for the $n$th elementary symmetric function in the set of variables $X=\{x_1,x_2,\ldots \}$.

The Delta Conjecture from \cite{HRW15} says that for any $1\le k \le n$, 
\begin{align}
\label{DeltaP}
\Delta ^{\prime} _{e_{k-1}} e_n (X) = 
\sum_{\pi } t^{ \text{area} (\pi) } \sum_{ \sigma \in \text{WP}(\pi) } x^{ \sigma } q^{ \text{dinv} (\sigma) }\prod_{a_i>a_{i-1}} (1+z/t^{a_i})\Big|_{z^{n-k}},
\end{align}
where $\pi$ is a Dyck path (a lattice path from $(0,0)$ to $(n,n)$ consisting of unit North and East steps, which never goes below the line $y=x$), $\sigma$ is a 
``word parking function" for $\pi$ (a labelling on the North steps of $\pi$ with positive integers which is increasing up columns), and $\text{dinv}$, $\text{area}$ are
statistics on these objects with simple combinatorial descriptions.  The product on the right-hand-side of (\ref{DeltaP}) is over all pairs of consecutive North steps of $\pi$,
$a_i$ is the number of area cells in the $i$th row of $\pi$, and $|_{z^j}$ means ``take the coefficient of $z^j$ in".  
We refer the reader to \cite{HRW15} for precise descriptions of these concepts; all we will need here is what happens to the
two sides of (\ref{DeltaP}) when we set $q=0$, which we describe in Section \ref{sec:algebraic}.  

When $k=n$ the Delta Conjecture reduces to the well-known Shuffle Conjecture from \cite{HHLRU05} (now a theorem of Carlsson and Mellit \cite{CM}).  
The techniques used to prove the Shuffle Conjecture though do not seem to apply immediately
when $k<n$.  Romero \cite{Rom17} proved the Delta Conjecture when $q=1$.  Zabrocki \cite{Zab16} proved one of two conjectures from 
\cite{HRW15} involving the coefficient of a hook Schur function in (\ref{DeltaP}), and in recent work D'Adderio and  Wyngaerd \cite{DaWy17} prove the other of these conjectures.
In \cite{HRS} it is proved that the right-hand-side of (\ref{DeltaP}), when $q=0$, is the graded
Frobenius characteristic of a certain symmetric-group module.  

We will often refer to the left-hand-side of (\ref{DeltaP}) as $\text{SF}(X;q,t)$ (the ``symmetric function side"), and the right-hand-side as $\text{Rise}_{n,k}(X;q,t)$ (the ``combinatorial side").  
Actually, the Delta Conjecture says that $\text{SF}(X;q,t)=\text{Rise}_{n,k}(X;q,t)=\text{Val}_{n,k}(X;q,t)$, where $\text{Val}_{n,k}(X;q,t)$ has a combinatorial description similar to $\text{Rise}_{n,k}(X;q,t)$.  
It is still unknown if any of the three functions $\text{Val}(X;q,t)$, $\text{Rise}_{n,k}(X;q,t)$, or $\text{SF}(X;q,t)$ equal each other, but results in  \cite{ReWi15}, \cite{Wilson}, \cite{HRW15}, and 
\cite{Rhoades} imply that $\text{Val}_{n,k}(X;q,0)=\text{Val}_{n,k}(X;0,q)=\text{Rise}_{n,k}(X;q,0)=\text{Rise}_{n,k}(X;0,q)$.
From its expansion in terms of Macdonald polynomials derived using \eqref{eqn:en}, it is clear that $SF(X;q,t)=SF(X;t,q)$.  
It follows that the $q=0$ case of the Delta Conjecture implies the case $t=0$.  
In the following sections we prove that $\text{SF}(X;0,q)=\text{Rise}_{n,k}(X;0,q)$,  which thus proves both the case $q=0$ and the case $t=0$ of the Delta Conjecture, which were open until now.

Finally it is worth mentioning that our proof 
of the Delta  conjecture at $q=0$ uses a novel method of proving symmetric function identities. The method consists in expanding both sides of the identity to be proved as a linear combination of special evaluations of a ``Cauchy  Kernel'' naturally  associated with  the problem. This method has recently been succesfully used in other closely related problems involving Delta operators.


\section{Preliminaries}
\label{Prelim}


In this section, we define necessary notation and introduce backgrounds. We basically follow the notation and terminology for symmetric functions in \cite{Mac}.

A \emph{partition} of $n$, denoted by $\lambda\vdash n$, is a sequence of weakly decreasing  positive integers 
$$\lambda = (\lambda_1,\lambda_2, \dots, \lambda_k)$$
such that $\sum_{i=1}^k \lambda_i =n$. Each $\lambda_i$ is called a \emph{part} and the number of parts is the \emph{length} of $\lambda$, 
denoted by $\ell(\lambda)$. For each $i$, $m_i (\lambda)$ counts the number of times $i$ occurs as a part of $\lambda$, called \emph{multiplicity} of $i$ in $\lambda$. 
Given a partition $\lambda$, we define 
$$n(\lambda) = \sum_{i\ge 1}(i-1)\lambda_i =\sum_{i\ge 1}\binom{\lambda_i '}{2}=\sum_{c\in \lambda}l(c)=\sum_{c\in \lambda}l'(c).$$
The \emph{conjugate} of a partition $\lambda$ is a partition $\lambda'$ whose parts are 
$$\lambda_i' = |\{ j ~:~ \lambda_j \ge i\}|,$$
or whose multiplicities $m_i (\lambda')=\lambda_i -\lambda_{i+1}$. 

For a partition $\lambda$ of $n$, we let 
$$m_\lambda (X),\quad e_\lambda (X),\quad h_\lambda (X),\quad p_\lambda(X),\quad s_\lambda (X),\quad\widetilde{H}_\lambda (X;q,t)$$
denote the \emph{monomial}, \emph{elementary}, \emph{complete homogeneous}, \emph{power sum}, \emph{Schur} and \emph{modified Macdonald symmetric functions}, respectively. 

In dealing with symmetric function identities, especially with those arising in the theory of Macdonald polynomials, we find it convenient and often indispensable to 
use plethystic notation and so here we define \emph{plethystic substitution}. 
Let $E=E(t_1,t_2,\dots)$ be a formal Laurent series with rational coefficients in indeterminates $t_1$, $t_2$, $\dots$. 
For the $k$th power sum symmetric function $p_k (X)= \sum_i x_i^k$, we define 
$$p_k [E] = E(t_1 ^k, t_2 ^k, \dots ).$$
This given, for any symmetric function $f$, we set 
$$f[E]\ses Q_f(p_1,p_2, \ldots )\Big|_{p_k\RA E(t_1^k,t_2^k,\ldots )}
$$
where $Q_f$ is the polynomial yielding the expansion of $f$ in 
terms of the power sum basis.
The convention is that in a plethystic expression $X$ stands for the sum of the original indeterminates $x_1 +x_2 +\cdots $, since $p_k [X] = p_k (X)$. 
Also, note that $p_k [X-Y] = p_k [X] - p_k[Y]$. Hence, for example, by
$p_k[X(1-t)]$ we mean $p_k(X)(1-t^k)$.  
In particular we have
\begin{align}
\label{eps}
p_k\left [ \epsilon X \right ] &=(-1)^k p_k(X) \\
\notag
f \left[ -\epsilon X \right ] &=\omega f(X),
\end{align}
for any symmetric function $f(X)$, where $\epsilon$ denotes a special symbol for replacing variables by their negatives inside the plethystic brackets and $\omega$ is the usual involution on symmetric functions which acts on the Schur basis as
$\omega s_{\lambda}(X) = s_{\lambda ^{\prime}}(X)$.
We refer readers unfamiliar with these concepts to Chapters $1$ and $2$ of \cite{Hag08} for background and more details on plethysm and Macdonald polynomials.  

To make our notations simple, we let 
\begin{align}\label{eqn:notations}
\begin{split}
M &= (1-q)(1-t),\\
B_\mu (q,t)& =\sum_{c\in \mu}q^{a'(c)}t^{l'(c)},\\
 \Pi_\mu ^{\prime}(q,t)& = \prod_{\substack{c\in \mu\\ c\ne (0,0)}}(1-q^{a^{\prime}(c)}t^{l^{\prime}(c)}),\\
w_\mu (q,t) &= \prod_{c\in \mu}(q^{a(c)} -t^{l(c)+1})(t^{l(c)} -q^{a(c)+1}).
\end{split}
\end{align}
We also define 
$$\Omega (X)=\prod_i \frac{1}{1-x_i}=\exp \left(\sum_{k\ge 1}\frac{p_k (X)}{k} \right).$$
The following two identities will be useful in the proof of the Delta Conjecture when $q=0$:
\begin{align}
& \text{(Cauchy formula) }\quad \Omega\left[ -\epsilon \frac{XY}{M}\right] =\sum_{\mu}\frac{\widetilde{H}[X;q,t]\widetilde{H}[Y;q,t]}{w_\mu (q,t)},\label{eqn:Cauchy}\\
& \text{(Koornwinder-Macdonald reciprocity formula) }\label{eqn:reciprocity}\\
& \qquad\qquad\qquad \frac{\widetilde{H}_\mu [1+u(MB_\lambda (q,t) -1);q,t]}{\prod_{c\in \mu}(1-u q^{a'(c)}t^{l'(c)})} =  \frac{\widetilde{H}_\lambda [1+u(MB_\mu (q,t) -1);q,t]}{\prod_{c\in \lambda}(1-u q^{a'(c)}t^{l'(c)})}.\qquad\qquad\qquad\nonumber
\end{align}
The following lemmas will be used in the next section.
\begin{lem}\cite[Theorem 3.4]{GH96}\label{lem:en}
$$e_n (X) = \sum_{\mu\vdash n}\frac{(1-q)(1-t)\widetilde{H}_\mu (X;q,t) \Pi_\mu ^{\prime}(q,t)B_\mu (q,t) }{w_\mu (q,t)}.$$
\end{lem}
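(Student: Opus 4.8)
The identity to be established expresses $e_n(X)$ in the modified Macdonald basis $\{\widetilde H_\mu(X;q,t)\}_{\mu\vdash n}$ with explicit coefficients, and the cleanest route is to use the Cauchy formula \eqref{eqn:Cauchy} together with the known action of the operators involved. The plan is as follows. First I would recall that the Cauchy kernel $\Omega[-\epsilon XY/M]$ is precisely the reproducing kernel for the scalar product $\langle\,\cdot\,,\,\cdot\,\rangle_*$ under which the $\widetilde H_\mu$ are orthogonal with $\langle \widetilde H_\mu,\widetilde H_\nu\rangle_* = \delta_{\mu\nu}\, w_\mu(q,t)$; equivalently, for any symmetric function $f$ of degree $n$ one has the expansion $f = \sum_{\mu\vdash n} \frac{\langle f, \widetilde H_\mu\rangle_*}{w_\mu(q,t)}\widetilde H_\mu$. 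Applying this to $f = e_n$, the lemma reduces to the single scalar-product evaluation
\begin{align}
\langle e_n, \widetilde H_\mu\rangle_* = (1-q)(1-t)\,\Pi'_\mu(q,t)\,B_\mu(q,t).\nonumber
\end{align}

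Second, I would compute $\langle e_n, \widetilde H_\mu\rangle_*$ by extracting the relevant coefficient from the Cauchy kernel directly: since $e_n(Y)$ pairs under $\langle\,\cdot\,,\,\cdot\,\rangle_*$ in a controlled way (it corresponds, via the standard Hall-inner-product translation $\langle f,g\rangle_* = \langle f, \omega\,\phi\, g\rangle$ where $\phi$ is the plethystic operator $g[X]\mapsto g[X M]$, up to the conventions in \cite{Hag08}), the quantity $\langle e_n, \widetilde H_\mu\rangle_*$ equals the coefficient of the appropriate monomial in the $Y$-expansion of the kernel, which by a standard computation is $\widetilde H_\mu$ evaluated at a near-trivial alphabet. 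Concretely, the classical fact (see \cite[Ch.~2]{Hag08}) is that $\langle e_n, \widetilde H_\mu\rangle_*$ is, up to the normalizing factor $M$, the value governed by $\widetilde H_\mu[1-u;q,t]\big|$-type specializations; carrying this out yields the product $\prod_{c\in\mu,\,c\neq(0,0)}(1-q^{a'(c)}t^{l'(c)}) = \Pi'_\mu(q,t)$ for the "hook-content"-like part and the factor $(1-q)(1-t)B_\mu(q,t)$ for the remaining part.

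Third, I would pin down the constants: the factor $B_\mu(q,t)$ arises because differentiating or taking the linear term of the plethystic evaluation of $\widetilde H_\mu$ at $1 + z(\text{something})$ picks out $\sum_{c\in\mu} q^{a'(c)}t^{l'(c)} = B_\mu$, while the $(1-q)(1-t) = M$ prefactor comes from the $1/M$ inside the Cauchy kernel. This is essentially Theorem 3.4 of \cite{GH96}, so in the write-up I would either cite it directly or, for self-containedness, assemble it from \eqref{eqn:Cauchy} and the dual-basis characterization. The main obstacle is bookkeeping: getting every plethystic sign (the $\epsilon$'s and $\omega$'s in \eqref{eps}), every factor of $M$, and the precise normalization of $\langle\,\cdot\,,\,\cdot\,\rangle_*$ consistent, so that the coefficient emerges as exactly $(1-q)(1-t)\,\Pi'_\mu(q,t)\,B_\mu(q,t)/w_\mu(q,t)$ rather than off by a unit. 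Since the statement is quoted verbatim from \cite{GH96}, in practice the lemma needs no new proof and one simply invokes the citation.
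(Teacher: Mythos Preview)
Your outline is essentially on the same track as the paper's proof---both start from the Cauchy formula \eqref{eqn:Cauchy} and reduce the lemma to an evaluation of $\widetilde H_\mu$ at a special alphabet---but the paper executes the key step more directly than you indicate. Rather than working with the star scalar product and then invoking ``$\widetilde H_\mu[1-u;q,t]$-type specializations'' followed by a differentiation/limiting argument to extract $B_\mu$, the paper simply reads off the degree-$n$ part of \eqref{eqn:Cauchy} to get $e_n[XY/M]=\sum_{\mu\vdash n}\widetilde H_\mu[X]\widetilde H_\mu[Y]/w_\mu$, sets $Y=M$, and then evaluates $\widetilde H_\mu[M;q,t]$ in one shot by specializing the Koornwinder--Macdonald reciprocity \eqref{eqn:reciprocity} at $u=1$ and $\lambda=(1)$, yielding $\widetilde H_\mu[M;q,t]=M\,B_\mu(q,t)\,\Pi'_\mu(q,t)$ directly. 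Your route would also work, but the ``carrying this out yields'' and ``differentiating to pick out $B_\mu$'' steps are where all the content lives, and you have not actually supplied them; naming the reciprocity formula at $\lambda=(1)$, $u=1$ is the missing concrete ingredient.
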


\begin{proof}
In the Cauchy formula \eqref{eqn:Cauchy}, if we read out the homogeneous component of degree $n$ in $X$ and $Y$ we get 
\begin{equation}\label{eqn:en_pf}
e_n \left[\frac{XY}{M} \right] =\sum_{\mu\vdash n}\frac{\widetilde{H}_\mu [X;q,t]\widetilde{H}_\mu [Y;q,t]}{w_\mu (q,t)}.
\end{equation}
Thus we only need to compute $\widetilde{H}_\mu [M;q,t]$ and let $Y=M$ in \eqref{eqn:en_pf}. 
To this end, we consider the Koornwinder-Macdonald reciprocity formula \eqref{eqn:reciprocity}.
If we cancel the common factor $(1-u)$ out of the denominator on both sides of \eqref{eqn:reciprocity} and set $u=1$
we obtain 
$$\frac{\widetilde{H}_\mu [MB_\lambda (q,t);q,t]}{\Pi_{\mu}'(q,t)}=\frac{\widetilde{H}_\lambda [MB_\mu (q,t);q,t]}{\Pi_{\lambda}'(q,t)}.$$
In the case when $\lambda=(1)$, the above identity reduces to 
$$\widetilde{H}_\mu [M;q,t] = MB_\mu (q,t) \Pi_\mu '(q,t)$$
which finishes the proof.
\end{proof}

\begin{lem}\label{lem:H}
For any monomial $u$, we have 
\begin{equation}\label{eqn:lem-2}
H_\mu [1-u;q] =  q^{n(\mu)}\prod_{j=0}^{\l(\mu)-1}(1-u/q^{j})=q^{n(\mu)-\binom{\l(\mu)}{2}}\prod_{j=0}^{\l(\mu)-1}(q^{j}-u).
\end{equation}
\end{lem}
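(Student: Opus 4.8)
The plan is to obtain Lemma~\ref{lem:H} from the value of a modified Macdonald polynomial on the ``binomial'' alphabet $1-u$, combined with the standard normalization expressing $H_\mu$ through $\widetilde{H}_\mu$, namely $H_\mu[X;q]=q^{n(\mu)}\,\widetilde{H}_\mu[X;0,1/q]$. The ingredient I would record first is the product formula
\[
\widetilde{H}_\mu[1-u;q,t]\;=\;\prod_{c\in\mu}\bigl(1-u\,q^{a'(c)}t^{l'(c)}\bigr),
\]
valid for every partition $\mu$ and every monomial $u$. This is immediate from the Koornwinder--Macdonald reciprocity \eqref{eqn:reciprocity}: setting $\lambda=\emptyset$ there makes $MB_\lambda(q,t)-1=-1$, so the left-hand side reduces to $\widetilde{H}_\mu[1-u;q,t]/\prod_{c\in\mu}(1-u\,q^{a'(c)}t^{l'(c)})$, while the right-hand side is $\widetilde{H}_\emptyset[\,\cdot\,]/1=1$; the formula is in any case standard (see \cite{Hag08}).

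Granting this, the computation is short. Substituting $(q,t)\mapsto(0,1/q)$ in the product formula, the factor attached to a cell $c$ of $\mu$ is $1-u\,0^{\,a'(c)}q^{-l'(c)}$, which equals $1$ as soon as $a'(c)>0$ and equals $1-u/q^{\,l'(c)}$ when $a'(c)=0$. The cells of $\mu$ with $a'(c)=0$ are exactly those in its leftmost column, and their colegs $l'(c)$ run over $0,1,\dots,\ell(\mu)-1$. Hence
\[
H_\mu[1-u;q]\;=\;q^{n(\mu)}\,\widetilde{H}_\mu[1-u;0,1/q]\;=\;q^{n(\mu)}\prod_{j=0}^{\ell(\mu)-1}\bigl(1-u/q^{\,j}\bigr),
\]
which is the first asserted identity. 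For the second, pull the scalar $q^{-j}$ out of the $j$-th factor; since $\sum_{j=0}^{\ell(\mu)-1}j=\binom{\ell(\mu)}{2}$ this rewrites $\prod_{j}(1-u/q^{j})$ as $q^{-\binom{\ell(\mu)}{2}}\prod_{j}(q^{j}-u)$, so $H_\mu[1-u;q]=q^{\,n(\mu)-\binom{\ell(\mu)}{2}}\prod_{j=0}^{\ell(\mu)-1}(q^{j}-u)$.

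The point requiring care is the bookkeeping of conventions: the shift by $q^{n(\mu)}$ and the inversion $t\mapsto 1/q$ in the passage from $\widetilde{H}_\mu$ to $H_\mu$ are precisely what produce the prefactor $q^{n(\mu)}$ and the negative powers of $q$ on the right, so they must be tracked consistently; one should also check that reciprocity may legitimately be invoked at the empty partition (or sidestep this by quoting the binomial specialization of $\widetilde{H}_\mu$ directly). As a sanity check I would verify the extremes $\mu=(n)$ and $\mu=(1^{n})$, where the right-hand side reduces to $1-u$ and to $q^{\binom{n}{2}}\prod_{j=0}^{n-1}(1-u/q^{j})$ respectively. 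An equivalent route avoiding \eqref{eqn:reciprocity} is to expand $H_\mu[X;q]$ in Schur functions, use that $s_\lambda[1-u]$ vanishes unless $\lambda=(a,1^{b})$ is a hook and then equals $(-u)^{b}(1-u)$, and verify the resulting identity for the hook Kostka--Foulkes polynomials, which reduces to a $q$-binomial manipulation.
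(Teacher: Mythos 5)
Your proposal is correct and follows essentially the same route as the paper: both obtain $\widetilde{H}_\mu[1-u;q,t]=\prod_{c\in\mu}(1-u\,q^{a'(c)}t^{l'(c)})$ from the reciprocity formula \eqref{eqn:reciprocity} at $\lambda=\emptyset$, convert to $H_\mu$ via the normalization $\widetilde{H}_\mu(X;q,t)=t^{n(\mu)}H_\mu(X;q,t^{-1})$, and then specialize $q=0$ so that only the first-column cells (colegs $0,\dots,\ell(\mu)-1$) contribute. The bookkeeping of the $q^{n(\mu)}$ prefactor and the $t\mapsto 1/q$ inversion is handled correctly, so no further changes are needed.
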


\begin{proof}
For $\lambda$ the empty partition, $\widetilde{H}_\lambda(X;q,t)=1$ and $MB_\lambda (q,t)-1=-1$, so 
\eqref{eqn:reciprocity} in this case reduces to 
 $\widetilde{H}_\mu [1-u;q,t]=\prod_{c\in\mu}(1-ut^{l'(c)}q^{a'(c)})$.
 Since $\widetilde{H}_\mu (X;q,t)=t^{n(\mu)}H_\mu (X;q,t^{-1})$, 
\begin{align*}
H_\mu [1-u;q,t] &= t^{n(\mu)}\prod_{c\in \mu}(1-u t^{-l'(c)}q^{a'(c)})\\
&= t^{n(\mu)}\prod_{\substack{c\in \mu,\\ a'(c)=0}}(1-u t^{-l'(c)})
\prod_{\substack{c\in \mu,\\ a'(c)>0}}(1-u t^{-l'(c)}q^{a'(c)}).
\end{align*}
Setting $q=0$ proves the lemma after replacing $t$ by $q$, since 
$$
H_\mu [1-u;t] = H_\mu [1-u;0,t] = t^{n(\mu)}\prod_{j=1}^{\l(\mu)}(1-u t^{-(j-1)})
= t^{n(\mu)-\binom{\l(\mu)}{2}}\prod_{j=1}^{\l (\mu)}(t^{j-1}-u).
$$
\end{proof}


\section{Schur expansion of $\Delta_{e_{k-1}} ' e_n$ when $q=0$}\label{sec:algebraic}


Let 
$$
C_{n,k}(X;t) = \text{Rise}_{n, k}(X;0,t),$$
we have the following:
\begin{lem}\cite[Theorem 6.11, 6.14]{HRS}\label{lem:rhs}
$$\omega(C_{n, k}(X;t))=\sum_{\substack{\mu\vdash n \\ \ell (\mu) = k}}t^{n(\mu)-\binom{k}{2}}\begin{bmatrix}k\\ m_1 (\mu), m_2 (\mu), \dots, m_n (\mu) \end{bmatrix}_t Q_\mu (X;t),$$
where
$$
Q_{\mu}(X;t) = \sum_{\lambda} s_{\lambda}(X)K_{\lambda,\mu}(t).
$$ 
Equivalently, for any $\lambda$ we have
\begin{equation}\label{eqn:Cnk}
\langle C_{n, k}(X;t), s_\lambda\rangle = \sum_{\substack{\mu\vdash n \\ \ell(\mu) = k}}t^{n(\mu)-\binom{k}{2}}K_{\lambda ^{\prime}, \mu}(t)\begin{bmatrix}k\\ 
m_1 (\mu),  m_2 (\mu), \dots,  m_n (\mu)\end{bmatrix}_t.
\end{equation}
\end{lem}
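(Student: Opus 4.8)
The statement is attributed to \cite{HRS}, so the plan is to reconstruct it from the combinatorial description of $\text{Rise}_{n,k}(X;0,t)$ rather than from the symmetric-function side. First I would recall what specializing $q=0$ does to the right-hand side of \eqref{DeltaP}: a word parking function $\sigma$ contributes $q^{\text{dinv}(\sigma)}$, so setting $q=0$ kills every term with positive dinv and leaves only the parking functions of dinv zero. For a fixed Dyck path $\pi$, the dinv-zero labellings are rigidly constrained — in fact one checks that dinv $=0$ forces the reading word to be essentially a concatenation of increasing runs determined by the area sequence, and the surviving objects are indexed by the composition of column heights. Tracking the factor $\prod_{a_i>a_{i-1}}(1+z/t^{a_i})\big|_{z^{n-k}}$ then restricts attention to paths whose ``rise set'' has exactly $n-k$ elements, equivalently whose number of maximal ascending runs (rows that begin a new higher diagonal) is $k$. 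The upshot is that $C_{n,k}(X;t)$ becomes a positive sum over data naturally encoded by a partition $\mu$ with $\ell(\mu)=k$, with the $x^\sigma$ weights assembling into a Hall–Littlewood $P$ or $Q$ function and the $t$-powers collecting into $t^{n(\mu)-\binom k2}$ times a $t$-multinomial coefficient.

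Concretely, the key steps would be: (1) show $\text{Rise}_{n,k}(X;0,t)=\sum_{\pi}t^{\text{area}(\pi)}\sum_{\sigma:\,\text{dinv}(\sigma)=0}x^\sigma\cdot[\text{rise condition}]$, i.e. carry out the $q=0$ specialization carefully; (2) classify the dinv-zero word parking functions on a given $\pi$ and observe that the generating function $\sum_{\sigma}x^\sigma$ over them is a monomial-symmetric or Hall–Littlewood building block whose shape is read off from $\pi$; (3) group the Dyck paths of a fixed ``shape'' together — paths with $k$ ascending runs and prescribed multiset of run lengths correspond to the multiset $\{\mu_1,\dots,\mu_k\}$, and summing $t^{\text{area}}$ over this group produces the Gaussian multinomial $\left[{k\atop m_1(\mu),\dots,m_n(\mu)}\right]_t$ times the base area $n(\mu)-\binom k2$; (4) recognize the resulting $X$-dependent factor as $Q_\mu(X;t)=\sum_\lambda s_\lambda(X)K_{\lambda,\mu}(t)$, using the standard Hall–Littlewood expansion, and then apply $\omega$ to pass between $s_\lambda$ and $s_{\lambda'}$, giving the ``equivalently'' form \eqref{eqn:Cnk}.

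The main obstacle I expect is step (2)–(3): pinning down exactly which labellings survive at $q=0$ and, simultaneously, matching the bookkeeping so that the area statistic over all paths in a shape class assembles precisely into $t^{n(\mu)-\binom k2}$ times the $t$-multinomial — the constant shift $\binom k2$ and the appearance of $n(\mu)$ (rather than $n(\mu')$ or some other normalization) are exactly where sign/conjugation conventions in the Hall–Littlewood world tend to trip one up, and it is presumably why the final clean statement is phrased after applying $\omega$. A secondary subtlety is justifying that the $X$-generating function of dinv-zero parking functions over a shape class is literally $Q_\mu(X;t)$ and not merely a sum with the right principal specialization; this should follow from the known interpretation of $K_{\lambda,\mu}(t)$ as a charge generating function over standard tableaux, but verifying the dictionary between ``charge'' and ``area of the associated Dyck path'' is the technical heart. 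Since all of this is already carried out in \cite[Theorems 6.11, 6.14]{HRS}, in the present article I would simply cite it and move on to using \eqref{eqn:Cnk}.
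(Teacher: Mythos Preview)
Your proposal lands on exactly what the paper does: it simply cites \cite[Theorems~6.11 and~6.14]{HRS} and moves on. The paper's proof is a single sentence pointing to \cite{HRS} with no further argument, so your extended sketch of the $q=0$ combinatorics (dinv-zero parking functions, grouping by run-length partition, recognizing the Hall--Littlewood $Q_\mu$) is additional content the paper does not reproduce, but your bottom line agrees with the paper's approach.
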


\begin{proof}
This is a direct result of combining Theorem 6.11 and Theorem 6.14 of \cite{HRS}.  See \cite{HRS} for details.
\end{proof}

\begin{prop}\label{prop:deltaconj_0}
The Delta Conjecture when $q=0$ is equivalent to the following identity 
\begin{multline}\label{eqn:deltaconj_0}
\sum_{\mu\vdash n} (-1)^{n-\ell(\mu)}q^{-n-n(\mu)+\sum_{i=1}^n \binom{m_i +1}{2}}\begin{bmatrix} \ell(\mu) -1\\k-1\end{bmatrix}_q 
\frac{(q;q)_{\ell(\mu)}}{\prod_{i=1}^n (q;q)_{m_i (\mu)}} K_{\lambda,\mu}(q^{-1})\\
=q^{-k(k-1)}\sum_{\mu\vdash n \atop \ell(\mu)=k }q^{n(\mu)} \frac{(q;q)_{\ell(\mu)}}{\prod_{i=1}^n (q;q)_{m_i (\mu)}}
K_{\lambda ',\mu} (q)
\end{multline}
for all $\lambda \vdash n$ and $1\le k\le n$. 
\end{prop}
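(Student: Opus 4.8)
The plan is to derive the claimed identity \eqref{eqn:deltaconj_0} by computing the Schur expansion of $\mathrm{SF}(X;0,q)$ directly from Lemma \ref{lem:en} and matching it against \eqref{eqn:Cnk}. Starting from
$$
e_n(X)=\sum_{\mu\vdash n}\frac{M\,\widetilde H_\mu(X;q,t)\,\Pi'_\mu(q,t)\,B_\mu(q,t)}{w_\mu(q,t)},
$$
I apply $\Delta'_{e_{k-1}}$, which multiplies the $\mu$-term by $e_{k-1}[B_\mu(q,t)-1]$, to obtain
$$
\mathrm{SF}(X;q,t)=\Delta'_{e_{k-1}}e_n(X)=\sum_{\mu\vdash n}\frac{M\,\Pi'_\mu(q,t)\,B_\mu(q,t)\,e_{k-1}[B_\mu(q,t)-1]}{w_\mu(q,t)}\,\widetilde H_\mu(X;q,t).
$$
The first main task is to set $t=0$ in every ingredient attached to a partition $\mu$. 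By Lemma \ref{lem:H} (and its proof, which gives $\widetilde H_\mu[1-u;q,t]=\prod_{c\in\mu}(1-ut^{l'(c)}q^{a'(c)})$, so at $t=0$ only the first column survives), the surviving $\mu$ must have all the mass relevant to $\widetilde H_\mu(X;0,q)$ concentrated in a controlled way; more precisely $\widetilde H_\mu(X;0,q)$ is (up to a power of $q$) the Hall–Littlewood polynomial $H_\mu$, whose Schur expansion is governed by the Kostka–Foulkes polynomials $K_{\lambda,\mu}(q)$. I would record the specializations at $t=0$ of $M$, $B_\mu$, $\Pi'_\mu$, $w_\mu$, and $e_{k-1}[B_\mu-1]$: for $B_\mu(q,0)=\sum_{i}q^{i-1}\cdot(\text{something})$ one gets that $B_\mu(q,0)$ only sees the bottom row, $e_{k-1}[B_\mu(q,0)-1]$ becomes an elementary symmetric function in $q^{1},q^{2},\dots,q^{\ell(\mu)-1}$ which evaluates (by the $q$-binomial theorem) to a single $q$-power times a Gaussian binomial $\begin{bmatrix}\ell(\mu)-1\\k-1\end{bmatrix}_q$, and $w_\mu(q,0)$ factors into a product over cells of $(q^{a(c)}-0)(0-q^{a(c)+1})$-type terms, producing the sign $(-1)^{n-\ell(\mu)}$, a power of $q$, and the factor $\prod_i (q;q)_{m_i(\mu)}$ together with $(q;q)_{\ell(\mu)}^{-1}$ after combining with $\Pi'_\mu(q,0)$. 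Assembling these specializations term by term yields exactly the left-hand side of \eqref{eqn:deltaconj_0}, with $K_{\lambda,\mu}(q^{-1})$ appearing because $\widetilde H_\mu$ versus $H_\mu$ and the $t\leftrightarrow q$ convention force an inversion of the parameter.

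On the other side, Lemma \ref{lem:rhs} already gives $\langle C_{n,k}(X;t),s_\lambda\rangle$ in closed form, and $C_{n,k}(X;t)=\mathrm{Rise}_{n,k}(X;0,t)$. Since the Delta Conjecture at $q=0$ is precisely the assertion $\mathrm{SF}(X;0,q)=\mathrm{Rise}_{n,k}(X;0,q)$, I pair both sides with $s_\lambda$ (using the Hall inner product and $\omega s_\lambda=s_{\lambda'}$ to turn $K_{\lambda',\mu}$ into $K_{\lambda,\mu}$ where needed), clear the common powers of $q$, and observe that the resulting equality of coefficient-of-$s_\lambda$ expressions is literally \eqref{eqn:deltaconj_0}. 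The proposition is an equivalence, so I must argue both directions: that \eqref{eqn:deltaconj_0} holding for all $\lambda\vdash n$ and all $1\le k\le n$ is the same as $\mathrm{SF}(X;0,q)=\mathrm{Rise}_{n,k}(X;0,q)$ for all such $k$; this is immediate once the Schur expansions of both sides have been identified, since $\{s_\lambda\}_{\lambda\vdash n}$ is a basis.

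I expect the main obstacle to be the bookkeeping of the $t=0$ specialization of $w_\mu(q,t)$ and $\Pi'_\mu(q,t)$ and reconciling the exact power of $q$ and sign with the exponents $-n-n(\mu)+\sum_i\binom{m_i+1}{2}$ and $n(\mu)-k(k-1)$ appearing in \eqref{eqn:deltaconj_0}; getting these monomial prefactors right requires careful arm/leg-statistic computations over the Ferrers diagram of $\mu$ at $t=0$ and matching $n(\mu)$, $n(\mu')$, and $\sum_i\binom{m_i+1}{2}$ identities. A secondary subtlety is the appearance of $K_{\lambda,\mu}(q^{-1})$ on the left versus $K_{\lambda',\mu}(q)$ on the right: I would handle this by using $\widetilde H_\mu(X;q,t)=t^{n(\mu)}H_\mu(X;q,1/t)$ together with the known behavior of $K_{\lambda,\mu}(t)$ under $t\mapsto t^{-1}$ and under conjugation, so that the parameter inversion on one side is matched by a conjugation of $\lambda$ on the other, after which the two closed forms agree coefficient-by-coefficient.
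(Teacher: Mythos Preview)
Your overall strategy is the same as the paper's: expand $e_n$ via Lemma~\ref{lem:en}, apply $\Delta'_{e_{k-1}}$, specialize the parameters, compute each of $M$, $B_\mu$, $\Pi'_\mu$, $w_\mu$, $e_{k-1}[B_\mu-1]$ at the specialization, extract the coefficient of $s_\lambda$ using $\widetilde K_{\lambda,\mu}(t)=t^{n(\mu)}K_{\lambda,\mu}(t^{-1})$, and match against \eqref{eqn:Cnk}.

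There is, however, a genuine confusion in your execution about \emph{which} variable to kill. You write ``set $t=0$'' and then compute $B_\mu(q,0)$, $w_\mu(q,0)$, etc., yet the formulas you claim to obtain (the $q$-binomial $\begin{bmatrix}\ell(\mu)-1\\k-1\end{bmatrix}_q$, the sign $(-1)^{n-\ell(\mu)}$, the factor $(q;q)_{\ell(\mu)}$) are exactly what arise from setting $q=0$ and then renaming $t\to q$, as the paper does. Concretely, $B_\mu(q,0)=1+q+\cdots+q^{\mu_1-1}$ sees the bottom \emph{row} and has length $\mu_1$, not $\ell(\mu)$; it is $B_\mu(0,t)=1+t+\cdots+t^{\ell(\mu)-1}$ that produces $\ell(\mu)$. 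Likewise your description of $w_\mu(q,0)$ as a product of $(q^{a(c)}-0)(0-q^{a(c)+1})$-type factors is wrong even at $t=0$ (cells with $l(c)=0$ give $t^{l(c)}=1$, not $0$), and in any case would yield powers and Pochhammers indexed by $\mu_1$ and the $m_i(\mu')$, not by $\ell(\mu)$ and the $m_i(\mu)$. The correct route is the paper's: set $q=0$ first, obtaining $B_\mu(0,t)$, $\Pi'_\mu(0,t)=(t;t)_{\ell(\mu)-1}$, $w_\mu(0,t)=(-1)^{n-\ell(\mu)}t^{2n(\mu)+n-\sum_i\binom{m_i+1}{2}}\prod_i(t;t)_{m_i}$, and $e_{k-1}[B_\mu(0,t)-1]=t^{\binom{k}{2}}\begin{bmatrix}\ell(\mu)-1\\k-1\end{bmatrix}_t$, then rename $t\to q$.

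One smaller point: your last paragraph suggests you intend to reconcile $K_{\lambda,\mu}(q^{-1})$ on the left with $K_{\lambda',\mu}(q)$ on the right via some Kostka--Foulkes symmetry. No such reconciliation is needed (or available) here. The proposition only asserts an \emph{equivalence}: the left side of \eqref{eqn:deltaconj_0} is literally $q^{-\binom{k}{2}}\langle \mathrm{SF}(X;0,q),s_\lambda\rangle$ and the right side is literally $q^{-\binom{k}{2}}\langle C_{n,k}(X;q),s_\lambda\rangle$, each computed independently. Proving they are equal is the content of Theorem~\ref{thm:deltaconj_0}, not of this proposition.
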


\begin{proof}
Observe that by \eqref{eqn:Cnk}, the right hand side of \eqref{eqn:deltaconj_0} equals $q^{-{k \choose 2}} \langle C_{n, k}(X;q), s_\lambda\rangle$.
 We obtain 
the left hand side of \eqref{eqn:deltaconj_0} by computing $\left.\langle\Delta_{e_{k-1}} ' e_n, s_\lambda\rangle\right|_{\substack{q=0\\ t\rightarrow q}}$ algebraically.\\
Recall that by Lemma \ref{lem:en},
\begin{equation}\label{eqn:en}
e_n (X) = \sum_{\mu\vdash n}\frac{(1-q)(1-t)\widetilde{H}_\mu (X;q,t) \Pi_\mu ^{\prime}(q,t)B_\mu (q,t) }{w_\mu (q,t)}.
\end{equation}
Applying $\Delta_{e_{k-1}}^{\prime}$ to both sides of (\ref{eqn:en}) we get 
\begin{equation}\label{eqn:Delta_en}
\Delta_{e_{k-1}} ^{\prime} e_n = \sum_{\mu\vdash n}\frac{(1-q)(1-t)\widetilde{H}_\mu (X;q,t) B_\mu(q,t) \Pi_\mu ^{\prime}(q,t) e_{k-1}[B_\mu -1]}{w_\mu}.
\end{equation}
If we let $q=0$ in \eqref{eqn:notations} we have 
\begin{align*}
B_\mu (0,t) &= 1+t+\cdots +t^{\ell (\mu)-1},\\
 \Pi_\mu ' (0,t) &= (1-t)\cdots (1-t^{\ell (\mu)-1}),\\
w_\mu (0,t) &= \prod_{c\in \mu}t^{l(c)} \cdot \prod_{\substack{c\in \mu\\ a(c)=0}}(1-t^{l(c)+1})\cdot \prod_{\substack{c\in \mu \\ a(c)>0}}(-t^{l(c)+1})\\
&= (-1)^{n-\ell (\mu)}t^{2n(\mu)+n-\sum_i \binom{m_i +1}{2}}\prod_{i}\varphi_{m_i}(t),\\
e_{k-1} [B_\mu (0,t)-1]&= e_{k-1} (t, t^2, \dots, t^{\ell (\mu)-1})=t^{k-1+\binom{k-1}{2}}\begin{bmatrix} \ell (\mu)-1\\ k-1\end{bmatrix}_t,
\end{align*}
where $\varphi_m (t)=(t;t)_m = (1-t)\cdots (1-t^m)$. 
We use the above computations in \eqref{eqn:Delta_en} to obtain 
\begin{multline*}
\left.\Delta_{e_{k-1}} ^{\prime}e_n\right|_{\substack{q=0\\ t\rightarrow q}} = \sum_{\mu\vdash n}\frac{(1-q) \widetilde{H}_\mu (X;0,q) (1+q+\cdots +q^{\ell(\mu)-1})(q;q)_{\ell(\mu)-1}q^{k-1+\binom{k-1}{2}}\begin{bmatrix} \ell(\mu)-1 \\ k-1\end{bmatrix}_q}{(-1)^{n-\ell(\mu)}q^{2n(\mu)+n-\sum_i \binom{m_i +1}{2}}\prod_{i}\varphi_{m_i}(q)}\\
= \sum_{\mu\vdash n}(-1)^{n-\ell(\mu)}q^{k-1+\binom{k-1}{2}-2n(\mu)-n+\sum_i \binom{m_i +1}{2}}\widetilde{H}_\mu (X;0,q) \begin{bmatrix} \ell (\mu)-1\\ k-1\end{bmatrix}_q\\ \times \begin{bmatrix} \ell (\mu)\\ m_1 (\mu), m_2 (\mu), \ldots , m_n (\mu)\end{bmatrix}_q.
\end{multline*}
Hence,
\begin{multline*}
\left.\langle \Delta_{e_{k-1} } ^{\prime} e_n, s_\lambda\rangle \right|_{\substack{q=0\\ t\rightarrow q}}\\
=\sum_{\mu\vdash n}\widetilde{K}_{\lambda , \mu}(q) (-1)^{n-\ell (\mu)}q^{\binom{k}{2}-2n(\mu)-n+\sum_i \binom{m_i +1}{2}} \begin{bmatrix} \ell (\mu)-1\\ k-1\end{bmatrix}_q \begin{bmatrix} \ell (\mu)\\ m_1 (\mu), m_2 (\mu), \dots, m_n (\mu)\end{bmatrix}_q.
\end{multline*}
Since $\widetilde{K}_{\lambda , \mu}(q) = q^{n(\mu)}K_{\lambda,\mu}(q^{-1})$, we eventually have 
\begin{multline*}
\left.\langle \Delta_{e_{k-1} } ^{\prime} e_n, s_\lambda\rangle \right|_{\substack{q=0\\ t\rightarrow q}}\\
=\sum_{\mu\vdash n}(-1)^{n-\ell (\mu)}q^{\binom{k}{2}-n(\mu)-n+\sum_i \binom{m_i +1}{2}} \begin{bmatrix} \ell (\mu)-1\\ k-1\end{bmatrix}_q 
\frac{(q;q)_{\ell(\mu)}}{\prod_{i=1}^n (q;q)_{m_i (\mu)}} K_{\lambda,\mu}(q^{-1}).
\end{multline*}
Then \eqref{eqn:deltaconj_0} is the result of multiplying both sides of 
$$ \left.\langle\Delta_{e_{k-1}} ' e_n, s_\lambda\rangle\right|_{\substack{q=0\\ t\rightarrow q}}=\langle C_{n, k}(X;q), s_\lambda\rangle,$$
by  $q^{-\binom{k}{2}}$.
\end{proof}


\section{The proof}


In this section, we prove the Delta Conjecture when $q=0$ by verifying the equivalent identity in \eqref{eqn:deltaconj_0}.

\begin{thm}\label{thm:deltaconj_0}
The Delta Conjecture is true when $q=0$. 
\end{thm}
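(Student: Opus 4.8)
By Proposition~\ref{prop:deltaconj_0}, it suffices to establish the symmetric-function--free identity \eqref{eqn:deltaconj_0} for all $\lambda\vdash n$ and $1\le k\le n$. The plan is to prove this by the ``Cauchy kernel'' method advertised in the introduction: rather than comparing the two sides coefficient-by-coefficient in the $\lambda$ index (which would require delicate control of the Kostka numbers $K_{\lambda,\mu}(q^{\pm1})$), I would package both sides into generating functions over $\lambda$ by pairing with a suitable Hall--Littlewood Cauchy kernel. Concretely, both $K_{\lambda,\mu}(q^{-1})$ and $K_{\lambda',\mu}(q)$ are entries of the transition matrix between Schur functions and the $Q_\mu(X;t)$ basis, so summing \eqref{eqn:deltaconj_0} against $s_\lambda(Y)$ over all $\lambda\vdash n$ converts the right-hand side into (a normalization of) $\sum_{\ell(\mu)=k} Q_\mu$-type sums and the left-hand side into $\sum_\mu \widetilde H_\mu$-type sums, at which point the identity becomes one between explicit symmetric functions in $Y$ — exactly the form Lemma~\ref{lem:en} and the reciprocity formula \eqref{eqn:reciprocity} are built to handle.

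\textbf{Key steps, in order.} First I would rewrite the left-hand side of \eqref{eqn:deltaconj_0} using the already-derived expansion (from the proof of Proposition~\ref{prop:deltaconj_0}) of $\left.\langle\Delta'_{e_{k-1}}e_n,s_\lambda\rangle\right|_{q=0,\,t\to q}$ in terms of $\widetilde H_\mu(X;0,q)$, so that the claim is literally an identity of symmetric functions in $X$:
\[
\left.\Delta'_{e_{k-1}}e_n\right|_{q=0,\,t\to q}
\;=\;\omega\big(C_{n,k}(X;q)\big)\ \text{up to the}\ \omega\ \text{and the known scalar},
\]
with the right-hand side given explicitly by Lemma~\ref{lem:rhs}. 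Second, I would expand the dual side: since $\widetilde H_\mu(X;0,q)$ specializes (via $\widetilde H_\mu(X;q,t)=t^{n(\mu)}H_\mu(X;q,t^{-1})$ and Lemma~\ref{lem:H} for the relevant evaluations) to a scalar multiple of the Hall--Littlewood polynomial $Q_\mu(X;q)$ when $q=0$ in the first variable, the two sides become sums of $Q_\mu$'s over partitions $\mu$ of $n$. Third — and this is where the real content lives — I would show the coefficients match: the left side sums over \emph{all} $\mu\vdash n$ with a $q$-binomial $\qbinom{\ell(\mu)-1}{k-1}_q$ weight, while the right side sums only over $\mu$ with $\ell(\mu)=k$; reconciling these requires a $q$-binomial identity that collapses the over-counting, most cleanly obtained by applying the reciprocity formula \eqref{eqn:reciprocity} (with $\lambda$ a single column or the relevant hook) to evaluate $\widetilde H_\mu$ at the specialized alphabet $1+u(MB_\lambda-1)$ and then extracting the coefficient of $u^{k-1}$, which produces precisely the $\qbinom{\ell(\mu)-1}{k-1}_q$ factor and forces the length constraint in the limit.

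\textbf{Main obstacle.} The hard part is the bookkeeping in the third step: tracking the powers of $q$ (the exponents $-n-n(\mu)+\sum_i\binom{m_i+1}{2}$ versus $-k(k-1)+n(\mu)$ in \eqref{eqn:deltaconj_0}) and the multinomial factors $(q;q)_{\ell(\mu)}/\prod_i(q;q)_{m_i(\mu)}$ through the plethystic evaluation of $\widetilde H_\mu$ at $M$ and at $1-u$, and checking that the reciprocity-driven cancellation really does convert the unrestricted sum over $\mu$ into the length-$k$ sum. I expect this to reduce, after the dust settles, to a single application of the $q$-Vandermonde or $q$-binomial theorem together with the explicit $w_\mu(0,t)$ and $\Pi'_\mu(0,t)$ formulas already recorded in the proof of Proposition~\ref{prop:deltaconj_0}; the conceptual step (use the Cauchy kernel plus reciprocity) is short, but verifying the scalar match on both sides is the delicate computation. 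A secondary subtlety is keeping the $\omega$ and the $\lambda\leftrightarrow\lambda'$ transpose in Lemma~\ref{lem:rhs} consistent with the $K_{\lambda,\mu}(q^{-1})$ versus $K_{\lambda',\mu}(q)$ appearing on the two sides of \eqref{eqn:deltaconj_0}, which is exactly what the $\omega$ in $\omega(C_{n,k})$ is there to supply.
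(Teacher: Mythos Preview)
Your overall strategy matches the paper's: reduce via Proposition~\ref{prop:deltaconj_0}, then sum the identity \eqref{eqn:deltaconj_0} against Schur functions to turn both sides into Hall--Littlewood expressions. But there is a genuine gap in your third step. You assert that after this repackaging ``the two sides become sums of $Q_\mu$'s over partitions $\mu$ of $n$'' in a common basis, so that the problem reduces to a coefficient-matching $q$-binomial identity. This is not what happens. Because the left side of \eqref{eqn:deltaconj_0} carries $K_{\lambda,\mu}(1/q)$ while the right side carries $K_{\lambda',\mu}(q)$, after summing against $s_{\lambda'}[X(1-q)]$ the left side lands in the basis $\{P_\mu[X;1/q]\}_\mu$ and the right side in $\{P_\mu[X;q]\}_\mu$; these are different bases and you cannot simply compare coefficients. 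Your proposed mechanism---evaluating $\widetilde H_\mu$ at $1+u(MB_\lambda-1)$ via reciprocity and extracting the $u^{k-1}$ coefficient---only regenerates the factor $e_{k-1}[B_\mu-1]\big|_{q=0}$ that is already present on the left; it does nothing to bridge the $q\leftrightarrow 1/q$ mismatch or to produce the length restriction $\ell(\mu)=k$ on the right.

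The missing idea, which the paper supplies, is to introduce an explicit family of scalars $c_i^{(k)}(q)$ (for $1\le i\le k$) such that the length-restricted sum $\sum_{\ell(\mu)=k} q^{n(\mu)} P_\mu[X;q]$ equals $\sum_{i=1}^k c_i^{(k)}(q)\,h_n[X(1-q^i)]/(1-q^i)$. The point is that each $h_n[X(1-q^i)]$ can be expanded in \emph{either} Hall--Littlewood basis via the Cauchy kernel $\sum_\mu P_\mu[X;t]H_\mu[Y;t]=h_n[XY]$ (applied once with $t=q$ and once with $t=1/q$, using Lemma~\ref{lem:H} for $H_\mu[1-q^i;t]$), so these functions serve as a common currency between the two sides. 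The $c_i^{(k)}(q)$ are determined by a triangular linear system and turn out to be closed-form (Lemma~4.4 in the paper); the final equality of $P_\mu[X;1/q]$-coefficients then reduces not to $q$-Vandermonde but to a ${}_2\phi_1$ summation (the paper's Proposition~4.5). Your sketch has the right shape up to the point where the two bases diverge, but without this intermediary $h_n[X(1-q^i)]$ expansion the argument cannot close.
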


\begin{proof}
In Proposition \ref{prop:deltaconj_0}, we showed that the Delta Conjecture at $q=0$ is equivalent  to the following identities 
\begin{equation}
LHS_{k,\la}\ses RHS_{k,\la}
\bigsp 
(\hbox{for all $\la\part n$ and $1\le k\le n$})
\label{eqn:pf-1}
\end{equation}
where
\begin{equation}
LHS_{k,\la}= 
\sum_{\mu\part n}
(-1)^{n-\ell(\mu)}
\Big[{ \ell(\mu)-1\atop k-1 }\Big]_q
q^{\sum_{i=1}^{n}({m_i+1\atop 2})- n(\mu)}
{q^{-n} (q;q)_{\ell(\mu)}\over \prod_{i=1}^n (q;q)_{m_i(\mu)}
}\ssp K_{\la,\mu}(1/q)\label{eqn:pf-2}
\end{equation}
and
\begin{equation}
RHS_{k,\la}= q^{-k(k-1) }
\sum_{\substack{\mu\part n, \\ \ell(\mu)=k }}
q^{n(\mu)}
{(q;q)_{\ell(\mu)}\over \prod_{i=1}^n (q;q)_{m_i(\mu)}
}
\ssp K_{\la',\mu}(q).\label{eqn:pf-3}
\end{equation}
We first eliminate the presence of the expressions
$$\prod_{i=1}^n (q;q)_{m_i(\mu)}$$
which make the equality in \eqref{eqn:pf-1} combinatorially forbidding. 
We note the following identity in \cite[III, (2.11)]{Mac}
\begin{equation}
P_\mu[X;q]\ses
 {1\over  \prod_{i=1}^{n}
(q;q)_{m_i(\mu)}
}\ssp Q_\mu[X;q]\label{eqn:pf-5}.
\end{equation}
The idea is to translate the equality in \eqref{eqn:pf-1} into a symmetric function identity involving the basis $
\big\{ P_\mu[X;q]\big\}_\mu 
$. Another important ingredient we find in Macdonald's book is the \emph{Hall-Littlewood Cauchy identity} \cite[III. (4.4)]{Mac}
\begin{equation}
\sum_{\mu\part n} P_\mu[X;q]Q_\mu[Y;q]\ses
\sum_{\rho\part n} {1\over z_\rho}p_\rho[XY(1-q)]\ses h_n[XY(1-q)]\label{eqn:pf-6}.
\end{equation}
The presence of the Kostka-Foulkes polynomials in both \eqref{eqn:pf-1} and \eqref{eqn:pf-2} suggests using the identity
\begin{equation}
Q_\mu[X;q]\ses \sum_{\la\ge \mu}s_\la[X(1-q)]K_{\la,\mu}(q)\label{eqn:pf-7}.
\end{equation}
Now if we define 
\begin{equation}
H_\mu[X;q]\ses Q_\mu[\tttt{X\over 1-q};q]
\ses \sum_{\la\ge \mu}s_\la[X]K_{\la,\mu}(q),\label{eqn:pf-8}
\end{equation}
then \eqref{eqn:pf-6} can be rewritten as the ``Cauchy Kernel'' 
\begin{equation}
\sum_{\mu\part n} P_\mu[X;q]H_\mu[Y;q]
\, =\,  h_n[XY]\label{eqn:pf-9}.
\end{equation}
This is particularly enticing since 
it remains true under the replacement of $q$ by $1/q$. 
Using \eqref{eqn:pf-7} and \eqref{eqn:pf-8}, we can rewrite \eqref{eqn:pf-9} as 
$$
\sum_{\mu\part n \atop \lambda \ge \mu} P_\mu[X;q] \sum_{\la\part n}
s_\la[Y]K_{\la,\mu}(q) 
\ses  h_n[XY]
$$
or better
$$
 \sum_{\la\part n}s_\la[Y]
 \sum_{\mu\le\la}
 K_{\la,\mu}(q)P_\mu[X;q]
 \ses h_n[XY],
$$
forcing the identity
\begin{equation}
s_\la[X]\ses \sum_{\mu\le \la}K_{\la,\mu}(q)P_\mu[X;q].\label{eqn:pf-10}
\end{equation}
The plan is to express $\sum_{\lambda\vdash n} LHS_{k,\lambda}s_{\lambda'}[X(1-q)]$ and 
$\sum_{\lambda\vdash n} RHS_{k,\lambda}s_{\lambda'}[X(1-q)]$ as identities in terms of the $P_{\mu}$'s. 
We first work with the right hand side of \eqref{eqn:pf-2}.  Using the fact that
$$
(q;q)_m\Big|_{q\RA {1\over q}}=\,\,
(1-1/q)(1-1/q^2)\cdots (1-1/q^m)\ses 
(-1)^m {(q;q)_m\over  q^{m+1 \choose 2}},
$$
\eqref{eqn:pf-2} can be rewritten as 
$$
LHS_{k,\la}=\, 
\sum_{ \mu\part n }
q^{- n(\mu)}
{1\over \prod_{i=1}^n (1/q;1/q)_{m_i(\mu)}
}\ssp  K_{\la,\mu}(1/q)(-q)^{-n} \Big[{ \ell(\mu)-1\atop k-1 }\Big]_q (q;q)_{\ell(\mu)}.
$$
Multiplying by $s_{\la'}[x(1-q)]$ and summing for all  $\la\part n$ gives
$$
\sum_{\la\part n}LHS_{k,\la}s_{\la'}[x(1-q)]=\,  
\sum_{ \mu\part n }
q^{- n(\mu)}
{\sum_{\la\part n}s_{\la'}[x(1-q)]K_{\la,\mu}(1/q)\over \prod_{i=1}^n (1/q;1/q)_{m_i(\mu)}
}(-q)^{-n} \Big[{ \ell(\mu)-1\atop k-1 }\Big]_q (q;q)_{\ell(\mu)}.
$$
But since
$s_{\la'}[x(1-q)]=q^n s_{\la'}[x(1/q-1)]=
  (-q)^n s_{\la}[x(1-1/q)]$, we obtain
$$
\sum_{\la\part n}LHS_{k,\la}s_{\la'}[x(1-q)]=\, 
\sum_{ \mu\part n }
q^{- n(\mu)}
{\sum_{\la\part n}s_{\la }[x(1-1/q)]K_{\la,\mu}(1/q)\over \prod_{i=1}^n (1/q;1/q)_{m_i(\mu)}
}  \Big[{ \ell(\mu)-1\atop k-1 }\Big]_q (q;q)_{\ell(\mu)}, 
$$
or better, using \eqref{eqn:pf-7},
$$
\sum_{\la\part n}LHS_{k,\la}s_{\la'}[x(1-q)]=\, 
\sum_{ \mu\part n }
q^{- n(\mu)}
{
Q_\mu[X;1/q]
\over \prod_{i=1}^n (1/q;1/q)_{m_i(\mu)}
}  \Big[{ \ell(\mu)-1\atop k-1 }\Big]_q (q;q)_{\ell(\mu)}.
$$
Thus \eqref{eqn:pf-5}  gives 
\begin{equation}
\sum_{\la\part n}LHS_{k,\la}s_{\la'}[x(1-q)]=
\sum_{\mu\part n}q^{- n(\mu)}
P_\mu[X;1/q]   
\Big[{ \ell(\mu)-1\atop k-1 }\Big]_q
(q;q)_{\ell(\mu)}.\label{eqn:pf-12}
\end{equation}

Let us now work on the other identity \eqref{eqn:pf-3}. 
Replicating what we did for \eqref{eqn:pf-2}, we multiply by 
 $s_{\la'}[x(1-q)]$ and sum over $\la\part n$  to get
\begin{align*}
\sum_{\la\part n}RHS_{k,\la}s_{\la'}[X(1-q)]\, &=\,
q^{-k(k-1) }
(q;q)_k 
\sum_{\substack{\mu\part n, \\ \ell(\mu)=k}} 
q^{n(\mu)}
{
\sum_{\la\part n}s_{\la'} [X(1-q)]K_{\la',\mu}(q)
\over \prod_{i=1}^n (q;q)_{m_i(\mu)}}\\
&= q^{-k(k-1)} 
(q;q)_k \sum_{\substack{\mu\part n, \\ \ell(\mu)=k}}
q^{n(\mu)}{ Q_\mu[X;q]\over \prod_{i=1}^n (q;q)_{m_i(\mu)}}\\
&= q^{-k(k-1) }
(q;q)_k\sum_ {\substack{\mu\part n,\\ \ell(\mu)=k }}
q^{n(\mu)}
  P_\mu[X;q],
\end{align*}
using \eqref{eqn:pf-7} and \eqref{eqn:pf-5} for the second and the third identities, respectively.
Thus, we are left to match up the following two identities 
\begin{equation}
\sum_{\la\part n}LHS_{k,\la}s_{\la'}[x(1-q)]=
\sum_{\mu\part n}q^{- n(\mu)}
P_\mu[X;1/q]   
\Big[{ \ell(\mu)-1\atop k-1 }\Big]_q
(q;q)_{\ell(\mu)}\label{eqn:pf-13}
\end{equation}
and 
\begin{equation}
\sum_{\la\part n}RHS_{k,\la}s_{\la'}[X(1-q)]\ses
q^{-k(k-1)}
(q;q)_k\sum_{\substack{\mu\part n\\ \ell(\mu)=k}}
q^{n(\mu)}
 P_\mu[X;q].\label{eqn:pf-14}
\end{equation}
The idea is to use the following two identities to evaluate \eqref{eqn:pf-13} and \eqref{eqn:pf-14} respectively :
\begin{equation}
\text{(a)}\ess\ess \sum_{\mu\part n} P_\mu[X;q]H_\mu[Y;q]
= h_n[XY],
\bigsp
\text{(b)}\ess\ess \sum_{\mu\part n} P_\mu[X;1/q]H_\mu[Y;1/q]
= h_n[XY].\label{eqn:pf-15}
\end{equation}
Computer experiments computing special cases of \eqref{eqn:pf-14} showed us that 
the Schur expansion of \eqref{eqn:pf-14} only contained the Schur functions indexed by hook shapes. 
The symmetric function identity which could explain this phenomenon is the following 
(valid for any monomial $u$) :
$$
s_\la[1-u]\ses \begin{cases}
(-u)^r(1-u) & \text{if $\la=n-r,1^r$ for $0\le r\le n-1$}\\
0 &  \text{otherwise.}
\end{cases}
$$ 
Hence we try the same substitution $Y=1-u$ in \eqref{eqn:pf-15}.

\begin{lem}\label{lem:h}
For all monomials $u$, we have 
\begin{equation}
h_n[X(1-u)] = \sum_{\mu\vdash n}P_{\mu}[X;q] H_{\mu} [1-u;q] =(1-u)\sum_{s=0}^{n-1} (-u)^s s_{(n-s,1^s)}[X].\label{eqn:lem-1}
\end{equation}
\end{lem}
\begin{proof}
Notice that for $Y=1-u$ \eqref{eqn:pf-15} (a) gives 
\begin{align*}
\sum_{\mu\vdash n}P_{\mu}[X;q] H_{\mu}[1-u;q] &= h_n [X-uX]\\
&= h_n [X] +\sum_{s=1}^{n-1} h_{n-s}[X] (-u)^s e_s [X] +(-u)^n e_n [X]\\
&= s_n [X] +\sum_{s=1}^{n-1} (-u)^s (s_{(n-s, 1^s)}[X] +s_{(n-s+1, 1^{s-1})}[X])+(-u)^n s_{(1^n)}[X]\\
&= (1-u)s_n [X] +\sum_{s=1}^{n-1} (-u)^s s_{(n-s, 1^s)}[X] +\sum_{s=1}^{n-1} (-u)^{s+1} s_{(n-s, 1^s)}[X]\\
&= (1-u)\sum_{s=0}^{n-1}(-u)^s s_{(n-s, 1^s)}[X].
\end{align*}
\end{proof}

Applying Lemma \ref{lem:H} in \eqref{eqn:pf-15} (a) with $u=q^i$ gives 
\begin{equation}
\frac{h_n [X(1-q^i)]}{1-q^i} =\sum_{\mu\vdash n}q^{n(\mu)}P_\mu [X;q] \prod_{j=1}^{\l (\mu)-1}(1-q^{i-j}).\label{eqn:pf-20}
\end{equation}
Lemma \ref{lem:h} tells us that taking the coefficients of powers of $u$ in the ratio $\ssp h_n[X(1-u)]/(1-u)\ssp $ successively yields  all the hook Schur functions. This fact suggests the hook Schur function expansion
would be explained if we can find coefficients
$c_i ^{(k)}(q)$  yielding the identity
\begin{equation}
\sum_ {\substack{\mu\part n, \\ \l(\mu)=k}}
q^{n(\mu)}
 P_\mu[X;q] =\sum_{i=1}^k c_i ^{(k)}(q)\frac{h_n[X(1-q^i)]}{1-q^i}.\label{eqn:pf-21}
\end{equation}
A procedure was thus constructed that determined the $c_i ^{(k)}(q)'s$ from the equations obtained by equating the two sides of \eqref{eqn:pf-21}. 
There we encountered another incredible surprise: these coefficients depended only on $k$ and not on $n$. 
Moreover, in all instances explored the  $c_i ^{(k)}(q)$'s turned out to be products of 
$q$-analogues of integers. 
This evidence prompted us to prove this new phenomenon by a close examination of the right hand side of \eqref{eqn:pf-21}. 
To this end notice first that using \eqref{eqn:pf-20} we can rewrite \eqref{eqn:pf-21} as
\begin{align}
\sum_ {\mu\part n}
q^{n(\mu)}
 P_\mu[X;q] \chi\big(\l(\mu)=k\big)
 &= \sum_{i=1}^k c_i ^{(k)}(q) \sum_{\mu\vdash n}q^{n(\mu)}P_{\mu}[X;q]\prod_{j=1}^{\l (\mu)-1}(1-q^{i-j})\nonumber\\
 &=\sum_{\mu\part n} q^{n(\mu)} P_\mu[X;q]\ssp
\sum_{i=1}^k c_i ^{(k)}(q)\prod_{j=1}^{\l(\mu)-1}
(1- q^{i-j}  ),\label{eqn:pf-22}
\end{align}
by changing the order of summation. Notice that to make the factors in $\prod_{j=1}^{\l(\mu)-1}
(1- q^{i-j}  )$ nonzero, we should have $i\ge \l(\mu)$. We use the $q$-Pochhammer symbol to denote  $\prod_{j=1}^{\l(\mu)-1}
(1- q^{i-j}  )$ in a compact form, namely, 
$$ \prod_{j=1}^{\l(\mu)-1} (1- q^{i-j}  ) = (q^{i+1-\l (\mu)};q)_{\l (\mu)-1}.$$
Since the family of symmetric polynomials 
$\big \{P_\mu[X;q] \big\}_{\mu\part n}$ is a basis, \eqref{eqn:pf-22} is true if and only if
\begin{equation}
\sum_{i=h}^k c_i ^{(k)}(q) (q^{i+1-h};q)_{h-1}=\chi (h=k),\qquad\text{ for all } 1\le h \le k.\label{eqn:pf-23}
\end{equation}
More importantly, we can easily  see that this system of equations is a triangular in the unknown coefficients $c_i(q)$. 
Thus the $c_i(q)$ exist and are unique.

A close look at the computer data revealed the 
 following explicit formulas for $c_i ^{(k)}(q)$.

\begin{lem}\label{lem:c_i}
\begin{equation}
c_i^{(k)}(q)= (-1)^{k-i}q^{k-i\choose 2}
\Big[{k-1 \atop i-1}
\Big]_q{ 1\over (q;q)_{k-1}}
=
{(-1)^{k-i}q^{k-i\choose 2} \over (q,q)_{k-i} 
 (q,q)_{i-1}}
\ess\ess\ess\ess \hbox{for $1\le i\le k$}.\label{eqn:c_i}
\end{equation}
\end{lem}

\begin{proof}
Since the equations in 
\eqref{eqn:pf-23} uniquely determine the $c_i ^{(k)}(q)$, to prove \eqref{eqn:c_i} it suffices to show that the $c_i ^{(k)}(q)$, as given by \eqref{eqn:c_i}, are  solutions of the equations in \eqref{eqn:pf-23}. Thus it is sufficient to show that
$$
\sum_{i=h}^k{(-1)^{k-i}q^{k-i\choose 2} \over (q,q)_{k-i} 
 (q,q)_{i-1}}\big(q^{i+1-h};q\big)_{h-1}
\ses\begin{cases}1 & \text{if } h=k\\ 0 & \text{if } h<k\end{cases}.
$$
Notice that this is trivially true for $h=k$. For $1\le h< k$, making the substitution 
$a=k-i$ we are reduced to showing that
$$
\sum_{a=0}^{k-h}
{(-1)^{a}q^{a\choose 2} \over (q,q)_{a} 
 (q,q)_{k-1-a}}\big(q^{k-a+1-h};q\big)_{h-1}
\ses 0.
$$
However, doing the necessary cancellations, this is none other than 
$$
\sum_{a=0}^{k-h}
(-1)^{a}q^{a\choose 2}{1 \over 
(q,q)_{a}(q,q)_{k-h-a} 
} 
\ses 0.
$$
Notice that if we multiply by $(q;q)_{k-h}$ the left hand side becomes 
$$
\sum_{a=0}^{k-h}
(-1)^{a}q^{a\choose 2}{ (q;q)_{k-h}\over 
(q,q)_{a}(q,q)_{k-h-a} 
} =\sum_{a=0}^{k-h} (-1)^a q^{\binom{a}{2}}\begin{bmatrix}k-h \\ a\end{bmatrix}_q 
$$
which is zero by the $q$-binomial theorem 
$$
(z;q)_n=(1-z)(1-zq)\cdots (1-zq^{n-1})
\, =\, \sum_{a=0}^{n}
(-z)^a q^{a\choose 2}\Big[{n\atop a} \Big]_q$$
with $z=1$. This completes the proof.
\end{proof}

Given \eqref{eqn:pf-21}, to verify that \eqref{eqn:pf-13} and \eqref{eqn:pf-14} are equal, we only need to show that 
\begin{equation}
\sum_{i=1}^k c_i ^{(k)}(q){h_n[X(1-q^i)]\over  1-q^i} \, =\,
{q^{k(k-1)}\over (q;q)_k}
\sum_{\mu\part n}q^{- n(\mu)}
P_\mu[X;1/q]   
\Big[{ \l(\mu)-1\atop k-1 }\Big]_q
(q;q)_{\l(\mu)},\label{eqn:pf-24}
\end{equation}
with $c_i ^{(k)}(q)$'s as given in Lemma \ref{lem:c_i}. 
To deal with the right hand side of \eqref{eqn:pf-24}, we utilize the identity \eqref{eqn:pf-15} (b) with the 
replacement $Y\RA (1-q^i)$  and obtain 
\begin{equation}
h_n[X(1-q^i)]\ses\sum_{\mu\part n} P_\mu[X;1/q]H_\mu[1-q^i;1/q]. \label{eqn:pf-30}
\end{equation}
By Lemma \ref{lem:H}, after replacing $q$ by $1/q$ and $u=q^i$, we have 
$$H_\mu [1-q^i;1/q] =q^{-n(\mu)}\prod_{j=0}^{\l (\mu)-1}(1-q^{i+j}).$$
Using this in \eqref{eqn:pf-30} gives 
\begin{equation}\label{eqn:pf-31}
{h_n[X(1-q^i)]\over  1-q^i}\ses
\sum_{\mu\part n} 
q^{-n(\mu)} P_\mu[X;1/q] 
\prod_{j=1}^{\l(\mu)-1}
(1-  q^{i+ j }  ).
\end{equation}
We use \eqref{eqn:pf-31} in the left hand side of \eqref{eqn:pf-24} to obtain an equivalent identity
$$
\sum_{i=1}^k c_i ^{(k)}(q)
\sum_{\mu\part n} 
q^{-n(\mu)} P_\mu[X;1/q] 
\prod_{j=1}^{\l(\mu)-1}
(1-  q^{i+ j }  )
\, =\,
{q^{k(k-1)}\over (q;q)_k}
\sum_{\mu\part n}q^{- n(\mu)}
P_\mu[X;1/q]   
\Big[{ \l(\mu)-1\atop k-1 }\Big]_q
(q;q)_{\l(\mu)},
$$
or better,
\begin{equation}
\sum_{\mu\part n} 
q^{-n(\mu)} P_\mu[X;1/q] 
\sum_{i=1}^k c_i ^{(k)}(q)\prod_{j=1}^{\l(\mu)-1}
(1-  q^{i+ j }  )
\, =\,
{q^{k(k-1)}\over (q;q)_k}
\sum_{\mu\part n}q^{- n(\mu)}
P_\mu[X;1/q]   
\Big[{ \l(\mu)-1\atop k-1 }\Big]_q
(q;q)_{\l(\mu)}.
\end{equation}
Since the family of polynomials 
$\big\{P_\mu(X;1/q)\big\}_{\mu\part n}$ is   also a basis, this can be true if and only if 
$$
\sum_{i=1}^k c_i ^{(k)}(q)\prod_{j=1}^{h-1}
(1-  q^{i+ j}  )\ses
{q^{k(k-1)}\over (q;q)_k}
\Big[{ h-1\atop k-1 }\Big]_q
(q;q)_{h}
$$
for $1\le h\le n$. Using the explicit expression for $c_i ^{(k)}(q)$,  this is none other than the equality  
\begin{equation}
\sum_{i=1}^k {(-1)^{k-i}q^{k-i\choose 2}(q^{i+1};q)_{h-1} \over (q,q)_{k-i} 
(q,q)_{i-1}}
\ses
{q^{k(k-1)}\over (q;q)_k}
\Big[{ h-1\atop k-1 }\Big]_q
(q;q)_{h}
\ess\ess\ess\ess\label{eqn:pf-29}
\end{equation}
for all $1\le h\le n$. 
With changing $k-i-1\RA ~i$ and the substitution $a=i-1$, \eqref{eqn:pf-29} becomes 
$$\sum_{a=0}^{k-1}\frac{(-1)^a q^{\binom{a}{2}}(q;q)_k (q;q)_{k-a+h-1}}{(q;q)_{k-a-1}(q;q)_a (q;q)_{k-a}(q;q)_h}
=q^{k(k-1)}\begin{bmatrix}h-1\\ k-1\end{bmatrix}_q.$$

\begin{prop}
\begin{equation}\label{eqn:id_pf}
\sum_{a=0}^{k-1}\frac{(-1)^a q^{\binom{a}{2}}(q;q)_k (q;q)_{k-a+h-1}}{(q;q)_{k-a-1}(q;q)_a (q;q)_{k-a}(q;q)_h}
=q^{k(k-1)}\begin{bmatrix}h-1\\ k-1\end{bmatrix}_q.
\end{equation}
\end{prop}

\begin{proof}
The left hand side of \eqref{eqn:id_pf} is equal to 
$$
\frac{(q;q)_k}{(q;q)_h}\sum_{a=0}^{k-1}\frac{(-1)^a q^{\binom{a}{2}} (q;q)_{k-a+h-1}}{(q;q)_{k-a-1}(q;q)_a (q;q)_{k-a}}
= \frac{(q;q)_{k+h-1}}{(q;q)_{h}(q;q)_{k-1}}\sum_{a=0}^{k-1} q^{a(k-h)} \frac{(q^{1-k};q)_a (q^{-k};q)_a}{(q;q)_a (q^{1-k-h};q)_a}.
$$
Now we apply the ${}_2\phi_1$ summation identity
$$\frac{(b;q)_m}{(c;q)_m}=\sum_{n=0}^m \frac{(q^{-m},c/b;q)_n}{(q,c;q)_n}(bq^m)^n,$$
with $m\mapsto k-1$, $b\mapsto q^{1-h}$ and $c\mapsto q^{1-k-h}$ to obtain
$$
\frac{(q^{1-h};q)_{k-1}}{(q^{1-k-h};q)_{k-1}}=\sum_{a=0}^{k-1}q^{a(k-h)}\frac{(q^{1-k};q)_a(q^{-k};q)_a}{(q;q)_a (q^{1-k-h};q)_a}.
$$
Hence
\begin{align*}
\frac{(q;q)_{k+h-1}}{(q;q)_h (q;q)_{k-1}}\cdot \frac{(q^{1-h};q)_{k-1}}{(q^{1-k-h};q)_{k-1}}
&= q^{k(k-1)}\frac{(q;q)_{h-1}}{(q;q)_{h-k}(q;q)_{k-1}}\\
&= q^{k(k-1)}\begin{bmatrix} h-1\\ k-1\end{bmatrix}_q.
\end{align*}
\end{proof}
This completes the proof of \eqref{eqn:pf-29} and the proof of the theorem.
\end{proof}



\end{document}